\theoremstyle{plain}
\numberwithin{equation}{section}
\newtheorem{theorem}{Theorem}[section]
\newtheorem{corollary}[theorem]{Corollary} 
\newtheorem{remark}[theorem]{Remark}
\newtheorem{proposition}[theorem]{Proposition}
\newtheorem{conjecture}[theorem]{Conjecture}
\title{A note on finiteness of Tate cohomology groups}
\author{Sabyasachi Dhar and Santosh Nadimpalli}
\date{\today}
\begin{document}
\begin{abstract}
    Let $G$ be a reductive algebraic group defined over a non-Archimedean local
    field $F$ of residue characteristic $p$. Let 
    $\sigma$ be an automorphism of $G$ of order $\ell$--a prime number--with
    $\ell\neq p$. Let $\Pi$ be a finite length 
    $\overline{\mathbb{F}}_\ell$-representation of 
    $G(F)\rtimes \langle\sigma\rangle$. We show that the Tate 
    cohomology $\widehat{H}^i(\langle\sigma\rangle, \Pi)$
    is a finite length representation of  $G^\sigma(F)$. We give an
    application to genericity of these Tate cohomology spaces. 
\end{abstract}
\maketitle
\section{Introduction}
Let $G$ be a reductive algebraic group defined over a non-Archimedean
local field $F$ of residue characteristic $p$. Let $\sigma$ be an
automorphism of $G$, defined over $F$, and we assume that the order of
$\sigma$ is a prime number $\ell\neq p$. We denote by $\Gamma$ the
group generated by $\sigma$. For a smooth $\overline{\mathbb{F}}_\ell$
(or $\ell$-modular) representation $(\Pi, V)$ of $G(F)\rtimes \Gamma$,
the Tate cohomology groups for the action of the cyclic group
$\Gamma$, denoted by $\widehat{H}^i(\Gamma, \Pi)$, are representations
of $G^\sigma(F)$, where $G^\sigma$ is the group of $\sigma$-fixed
points of $G$.  In the work of Treumann and Venkatesh \cite{TV_paper}
on the mod-$\ell$ functoriality lifting of systems of Hecke
eigenvalues from $H^\sigma$ to $H$ (here, $H$ is a reductive algebraic
group defined over a number field $K$ and $\sigma$ is an automorphism
of $H$ defined over $K$), they conjectured that, for any finite length
$\ell$-modular representation $(\Pi, V)$ of $G(F)\rtimes \Gamma$, the
$G^\sigma(F)$ representations $\widehat{H}^i(\Gamma, \Pi)$ are of
finite length, for $i\in \{0,1\}$ (see \cite[Conjecture
6.3]{TV_paper}). Several significant cases are proved in our previous
works (see \cite[Proposition 5.5]{Nadimpalli_2024_GLn} and
\cite[Theorem 3.4.1]{dhar2025jacquetmodulestatecohomology}) and in
this note, we prove this finiteness result in complete generality. We
prove:
\begin{theorem}\label{main}
  Let $G$ be a reductive algebraic group defined over a
  non-Archimedean local field $F$ of residue characteristic $p$, and
  let $\sigma$ be an automorphism of $G$, defined over $F$, and has
  prime order $\ell\neq p$. Set $\Gamma=\langle\sigma\rangle$, and let
  $\Pi$ be a finite length $\ell$-modular representation of
  $G(F)\rtimes \Gamma$.  Then the Tate cohomology spaces
  $\widehat{H}^i(\Gamma, \Pi)$ are finite length $\ell$-modular
  representations of $G^\sigma(F)$.
\end{theorem}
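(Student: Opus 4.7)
My plan is to reduce, via d\'evissage and Clifford theory, to a $\sigma$-invariant irreducible representation, then to reduce further to the supercuspidal case using the authors' Jacquet-module theorem, and finally to treat the supercuspidal case via compact induction from a $\sigma$-stable type.

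Since Tate cohomology produces a long exact sequence and finite length is preserved under extensions, one first reduces to the case of $\Pi$ irreducible as a $G(F)\rtimes\Gamma$-module. Clifford theory for the cyclic group $\Gamma$ of prime order $\ell$ splits into two cases: either $\pi := \Pi|_{G(F)}$ is irreducible, or $\Pi \cong \operatorname{Ind}_{G(F)}^{G(F)\rtimes\Gamma}\pi_0$ for some irreducible $\pi_0$ of $G(F)$; in the latter case $\Pi$ is $\overline{\mathbb{F}}_\ell[\Gamma]$-free, so $\widehat{H}^i(\Gamma,\Pi)=0$, and finiteness is trivial. We may therefore assume $\pi$ is irreducible and $\sigma$-invariant.

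Next, using \cite[Theorem 3.4.1]{dhar2025jacquetmodulestatecohomology}, one reduces to the case where $\pi$ is supercuspidal. Since $\pi$ is $\sigma$-invariant, its supercuspidal support $(M,\rho)$ may be chosen so that $M$ is a $\sigma$-stable Levi and $\rho$ is a $\sigma$-invariant supercuspidal representation of $M(F)$. As $\pi$ is then a subquotient of the parabolic induction $\operatorname{Ind}_P^G(\rho)$ for a $\sigma$-stable parabolic $P$ with Levi $M$, finiteness of $\widehat{H}^i(\Gamma,\pi)$ follows from finiteness of $\widehat{H}^i(\Gamma,\rho)$ by invoking the commutation of Tate cohomology with Jacquet modules on the $G^\sigma$-side, combined with the preservation of finite length under parabolic induction on $G^\sigma(F)$.

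For the supercuspidal input, choose a $\sigma$-stable type $(J,\lambda)$ realizing $\rho \cong \cind_J^{M(F)}\lambda$ and extend $\lambda$ to $J\rtimes\Gamma$ as $\widetilde{\lambda}$, so that $\rho \cong \cind_{J\rtimes\Gamma}^{M(F)\rtimes\Gamma}\widetilde\lambda$ carries the chosen $M(F)\rtimes\Gamma$-structure. A Mackey-type orbit decomposition -- using that free $\Gamma$-orbits on $M(F)/J$ contribute zero to Tate cohomology and that fixed orbits are parametrised by $M^\sigma(F)/J^\sigma$ via the vanishing $H^1(\Gamma,J)=0$ when the pro-order of $J$ is coprime to $\ell$ -- yields
\[
\widehat{H}^i(\Gamma,\rho) \;\cong\; \cind_{J^\sigma}^{M^\sigma(F)}\widehat{H}^i(\Gamma,\widetilde\lambda).
\]
Since $\widetilde\lambda$ is finite-dimensional, the right-hand side is a finitely generated $M^\sigma(F)$-module, and one upgrades to finite length using admissibility of finite length $\ell$-modular representations together with the Noetherianity of Bernstein blocks for $M^\sigma(F)$.

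The hardest step will be the supercuspidal input: producing a $\sigma$-stable type for a general $\sigma$-invariant supercuspidal of a general reductive group in the $\ell$-modular setting (building on the type-theoretic work of Fintzen and others), rigorously justifying the $G^\sigma(F)$-equivariance of the Mackey decomposition, and passing from finitely generated to finite length via an admissibility bound for the Tate cohomology. A subsidiary subtlety is arranging that the type $(J,\lambda)$ may be chosen with $J$ of pro-order coprime to $\ell$, so that the vanishing $H^1(\Gamma,J)=0$ applies cleanly in the orbit decomposition.
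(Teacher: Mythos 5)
Your proposal has genuine gaps, and the overall route diverges from what can actually be carried out. The most serious problem is the supercuspidal step. Compact induction from a $\sigma$-stable type $(J,\lambda)$ is not available for a general $\sigma$-invariant supercuspidal of a general reductive group in the $\ell$-modular setting, and the subsidiary hypothesis you flag --- that $J$ can be chosen with pro-order coprime to $\ell$ --- is essentially never satisfiable: $J$ contains a compact open subgroup of $M(F)$ whose pro-order is divisible by the order of a finite reductive quotient, hence typically by $\ell$. So the Mackey/orbit argument, which is the engine of your supercuspidal case, does not get off the ground. The reduction step before it is also flawed: knowing that $\pi$ is a subquotient of $i_P^G(\rho)$ does not let you deduce finiteness of $\widehat{H}^i(\Gamma,\pi)$ from finiteness of $\widehat{H}^i(\Gamma,\rho)$, because Tate cohomology is not exact in a way that isolates one subquotient, and the only available comparison is a one-sided inclusion $\widehat{H}^i(\Gamma,\Pi)_{N(F)}\subset\widehat{H}^i(\Gamma,\Pi_{U(F)})$, not a commutation of Tate cohomology with parabolic induction. (One also cannot in general arrange the supercuspidal support $(M,\rho)$ of a $\sigma$-invariant $\pi$ to be literally $\sigma$-stable rather than $\sigma$-stable up to conjugacy.)

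You have also missed the difficulty the paper identifies as the actual obstruction: the maximal $F$-split torus $C$ of $Z(G^\sigma)$ may be strictly larger than that of $Z(G)$, in which case a finitely generated cuspidal representation of $G^\sigma(F)$ need not have finite length because its restriction to the split center is uncontrolled. Your final step, ``one upgrades to finite length using admissibility of finite length representations together with Noetherianity,'' is circular at exactly this point. The paper's proof instead (a) quotes its predecessor for finite generation of $\widehat{H}^i(\Gamma,\Pi)$ via Schneider--Stuhler resolutions, (b) proves a vanishing theorem $\widehat{H}^i(\Gamma,\Pi(U(F)))=0$ for $U$ the unipotent radical of the $\sigma$-stable parabolic with Levi $Z_G(C)$ (an averaging argument over $\sigma$-stable compact open subgroups of $U(F)$, which have no nontrivial $\sigma$-fixed points since they are pro-$p$), thereby reducing to the case $C\subset Z(G)$ where central characters are controlled, and then (c) splits $\widehat{H}^i(\Gamma,\Pi)$ into a cuspidal part (finitely generated cuspidal with controlled center, hence finite length) and parts embedded in parabolic inductions of Tate cohomologies of Jacquet modules, handled by induction on Levi subgroups. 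If you want to salvage your outline, you would need to replace the type-theoretic supercuspidal input with an argument of this kind.
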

\section{proof of Theorem \ref{main}}
In our article \cite[Theorem
2.3]{dhar2025jacquetmodulestatecohomology}, we showed that the Tate
cohomology spaces $\widehat{H}^i(\Gamma, \Pi)$, for $i\in\{0,1\}$, are
finitely generated representations of $G^\sigma(F)$, and it remains to
show that these cohomology groups are admissible representations of
$G^\sigma(F)$.  Note that the main technical inputs for the finite
generation of Tate cohomology groups are Schneider and Stuhler
resolutions.  The hurdle to prove admissibility is with the case where
the rank of a maximal $F$-split torus in $Z(G^\sigma)$ can be larger
than that of the rank of a maximal $F$-split torus of $Z(G)$. The
following proposition (Proposition \ref{prop}) reduces Theorem
\ref{main} to the case where the maximal $F$-split torus of
$Z(G^\sigma)$ is contained in $Z(G)$.
 
Let $C$ be the maximal $F$-split torus of $Z(G^\sigma)$. Let $S$ be a
maximal torus of $(G^\sigma)^0$ defined over $F$, where $(G^\sigma)^0$
is the connected component of the identity element of $G^\sigma$.  The
centralizer $Z_G(S)$ is a $\sigma$-stable maximal torus of $G$ defined
over $F$, and we denote it by $T$ (see \cite[Proposition
12.8.1(2)]{kaletha_prasad}).  Let $\Phi(G, T)$ be the set of roots of
$T$ in $G$. Note that $\Phi(G, T)$ is the absolute root system of $G$.
Let $M$ be the centralizer of $C$ in $G$ (denoted by $Z_G(C)$).  The
group $M$ is a $\sigma$-stable Levi subgroup of an $F$-rational
parabolic subgroup $P$ of $G$ (see \cite[Proposition 20.4]{borel_book}
and \cite[Theorem 4.15]{borel_tits_gr}). We want to claim that $M$ is
a Levi-subgroup of a $\sigma$-stable parabolic subgroup $P$ of $G$.
For this, we follow the arguments in the proof of \cite[Proposition
20.4]{borel_book}. Let $\lambda\in X_\ast(C)$ be a non-zero element
such that for all $\alpha\in \Phi(G, T)$ which restrict to a non-zero
element in $X^\ast(C)$, we have $\langle\alpha, \lambda\rangle\neq
0$. We set
$$\Psi=\{\alpha\in \Phi(G, T): \langle\alpha, \lambda\rangle>0\}.$$
Note that 
$$\Phi(G, T)=\Phi(Z_G(C), T)\coprod \Psi\coprod -\Psi.$$
The unipotent group $U_\Psi$ generated by the root groups $U_\beta$,
for $\beta\in \Psi$, is a $\sigma$-stable subgroup defined over
$F$. The group $P=Z_G(C)U_\Psi$ is a $\sigma$-stable parabolic
subgroup defined over $F$ and contains $M=Z_G(C)$ as its Levi
subgroup.  The unipotent radical of $P$, i.e., the group $U_\Psi$,
will be denoted by $U$.  The group $U$ is $\sigma$-stable.  By
definition, $G^\sigma$ is contained in $M$, and thus, we get that
$M^\sigma=G^\sigma$. Observe that the maximal $F$-split torus in
$Z(M^\sigma)=Z(G^\sigma)$ is contained in $Z(M)$.  We denote by
$\Pi_{U(F)}$, the Jacquet module of $\Pi$ with respect to $U(F)$, and
let $\Pi(U(F))$ be the kernel of the natural map
$\Pi\rightarrow \Pi_{U(F)}$. Note that the space $\Pi(U(F))$ is
$\sigma$-stable.
\begin{proposition}\label{prop}
  Let $\Pi$ be a smooth $\ell$-modular representation of $G(F)$ such
  that $\Pi$ extends as a representation of $G(F)\rtimes \Gamma$. Then
  the Tate cohomology space $\widehat{H}^i(\Gamma, \Pi(U(F)))$ is
  trivial, for $i\in \{0,1\}$.
\end{proposition}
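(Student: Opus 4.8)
The plan is to prove the stronger statement that $\Pi(U(F))$ is a free --- equivalently, cohomologically trivial --- $\overline{\mathbb{F}}_\ell[\Gamma]$-module, which gives $\widehat{H}^i(\Gamma,\Pi(U(F)))=0$ for all $i$, not merely $i\in\{0,1\}$. The idea is to write $\Pi(U(F))$ as a filtered colimit of pieces that are stable under pro-$p$ subgroups of $U(F)$, and to read off freeness over $\overline{\mathbb{F}}_\ell[\Gamma]$ from the fact that $\sigma$ acts on those pro-$p$ groups without nontrivial fixed points.

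First I would set up the pieces. For a compact open subgroup $U_0$ of $U(F)$, the averaging idempotent $e_{U_0}\in\operatorname{End}_{\overline{\mathbb{F}}_\ell}(\Pi)$ is defined, since the relevant indices are powers of $p$ and hence invertible in $\overline{\mathbb{F}}_\ell$; it commutes with the $U_0$-action, so $\Pi=\Pi^{U_0}\oplus\ker(e_{U_0})$ as $U_0$-representations and $\ker(e_{U_0})$ is a smooth $U_0$-subrepresentation. These subspaces increase with $U_0$, and $\Pi(U(F))=\bigcup_{U_0}\ker(e_{U_0})$. If $U_0$ is in addition $\sigma$-stable --- such $U_0$ being cofinal --- then $e_{U_0}$ commutes with $\sigma$, so $\ker(e_{U_0})$ is a smooth representation of $U_0\rtimes\Gamma$, and $\Pi(U(F))=\varinjlim_{U_0}\ker(e_{U_0})$ as $\overline{\mathbb{F}}_\ell[\Gamma]$-modules. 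Since Tate cohomology commutes with filtered colimits, it is enough to show that $\ker(e_{U_0})$ is free over $\overline{\mathbb{F}}_\ell[\Gamma]$ for every $\sigma$-stable compact open $U_0\subseteq U(F)$.

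Now fix such a $U_0$. It is a pro-$p$ group and $\ell\neq p$, so every smooth $\overline{\mathbb{F}}_\ell$-representation of $U_0$ is semisimple; write $\ker(e_{U_0})=\bigoplus_{\rho}\ker(e_{U_0})_\rho$ for its decomposition into $U_0$-isotypic components. The trivial component equals $\ker(e_{U_0})^{U_0}=\Pi^{U_0}\cap\ker(e_{U_0})=0$, so only nontrivial irreducibles $\rho$ occur, and $\sigma$ permutes the components, carrying $\ker(e_{U_0})_\rho$ to $\ker(e_{U_0})_{\rho\circ\sigma^{-1}}$. The decisive point --- and the one place the parabolic $P$ (equivalently $M$) really enters --- is that $\sigma$ acts on $U_0$ with trivial fixed points: indeed $U^\sigma=G^\sigma\cap U\subseteq M\cap U=1$, whence $U_0^\sigma\subseteq U(F)^\sigma=1$. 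Consequently $\sigma$ fixes no nontrivial irreducible $\overline{\mathbb{F}}_\ell$-representation of $U_0$: any such $\rho$ factors through a $\sigma$-stable finite $p$-group quotient $H$ of $U_0$, on which $\sigma$ still acts with $H^\sigma=1$ by coprime action; then Brauer's permutation lemma (valid since $\ell\nmid|H|$) together with Glauberman's correspondence shows the number of $\sigma$-fixed irreducibles of $H$ equals the number of conjugacy classes of $H^\sigma=1$, namely one.

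Hence every $\langle\sigma\rangle$-orbit of irreducibles occurring in $\ker(e_{U_0})$ has size exactly $\ell$, and collecting the isotypic components orbit by orbit exhibits $\ker(e_{U_0})$ as a direct sum of $\overline{\mathbb{F}}_\ell[\Gamma]$-modules each isomorphic to $\overline{\mathbb{F}}_\ell[\Gamma]\otimes_{\overline{\mathbb{F}}_\ell}W_\rho$ for a $\overline{\mathbb{F}}_\ell$-vector space $W_\rho$, i.e.\ as a free $\overline{\mathbb{F}}_\ell[\Gamma]$-module. Its Tate cohomology therefore vanishes, and the colimit of the second paragraph gives $\widehat{H}^i(\Gamma,\Pi(U(F)))=0$ for $i\in\{0,1\}$. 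In carrying this out, the two points that need genuine care are the standard-but-not-formal inputs used in the third paragraph --- semisimplicity of smooth $\overline{\mathbb{F}}_\ell$-representations of a pro-$p$ group, and the fixed-point-free automorphism statement for $p$-groups --- together with the verification that $\sigma$-stable compact open subgroups are cofinal and that $\Pi(U(F))$ is the union of the $\ker(e_{U_0})$; everything else is bookkeeping with idempotents and colimits, and I do not expect a serious obstacle beyond pinning down references for those two facts.
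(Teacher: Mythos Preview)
Your argument is correct and in fact establishes the stronger fact that $\Pi(U(F))$ is a free $\overline{\mathbb{F}}_\ell[\Gamma]$-module. The paper actually sketches essentially your approach in its opening paragraph (Glauberman applied to $U_0/U_1$, using $(U_0/U_1)^\sigma=\{e\}$), so at the level of ideas you are aligned with the authors. What you add is a clean structural packaging: the filtered colimit over $\sigma$-stable $U_0$, the isotypic decomposition of $\ker(e_{U_0})$, and the observation that free $\sigma$-orbits on nontrivial irreducibles yield freeness over $\overline{\mathbb{F}}_\ell[\Gamma]$.

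Where the paper genuinely differs is in its \emph{main} written proof, which deliberately avoids Glauberman and any representation theory of $U_0$. For $i=0$ it takes $w\in\Pi(U(F))^\sigma$, chooses $\sigma$-stable $U_1\triangleleft U_0$ with $w\in\Pi^{U_1}$ and $\int_{U_0}\Pi(u)w\,du=0$, and simply splits $\sum_{h\in H}\Pi(h)w$ (with $H=U_0/U_1$) as the term for $h=e$ plus a norm over the free $\sigma$-orbits on $H\setminus\{e\}$; this exhibits $w$ as a norm. For $i=1$ an analogous bare-hands manipulation shows any $v\in\ker(\mathrm{Nr})$ lies in $\mathrm{Im}(1-\sigma)$. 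The only input is the elementary fact $H^\sigma=\{e\}$; no Brauer permutation lemma, no isotypic components. So the trade-off is: your route is more conceptual and yields a stronger conclusion (vanishing in all degrees), while the paper's direct route is shorter and uses nothing beyond the orbit decomposition of the finite $p$-group $H$ under $\sigma$. The two points you flag as needing care (cofinality of $\sigma$-stable $U_0$, and $(U_0/U_1)^\sigma=1$) are exactly the ones the paper also relies on, and both follow from coprimality of $\ell$ and $p$ together with $U(F)^\sigma=\{1\}$.
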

\begin{proof}
  The above proposition can be derived from Glauberman's
  correspondence applied to the $p$-group $U_0/U_1$, where $U_0$ and
  $U_1$ are $\sigma$-stable compact open subgroups of $U(F)$ such that
  $U_1$ is a normal subgroup of $U_0$. Since $U(F)$ does not have
  $\sigma$-fixed points, and from the fact that $U_0$ is a pro-$p$
  group, we get that $(U_0/U_1)^\sigma$ is a trivial group.  If
  $(\rho, V)$ is an $\ell$-modular representation of
  $(U_0/U_1)\rtimes \Gamma$, then we see that the Tate cohomology
  $\widehat{H}^i(\Gamma, \rho)$ decomposes into Tate cohomology of
  $\sigma$-stable sub-representations of $\rho$. But the only
  $\sigma$-stable sub-representations of $U_0/U_1$ are trivial
  representations by Glauberman's correspondence. We can apply these
  arguments to $\Pi(U(F))$ for $\sigma$-stable representations spanned
  by vectors which are annihilated by the augmentation ideal of some
  $U_0/U_1$ as above. We choose to give a direct argument below
  without appealing to Glauberman's correspondence.

Let us first consider the case $i=0$.
Let $w\in \Pi(U(F))^\sigma$ and let $U_0$ be a $\sigma$-stable
 compact open subgroup of $U(F)$
 such that 
 \begin{equation}\label{argu}
\int_{U_0}\Pi(u)w\,du=0,
 \end{equation}
 where $du$ is a Haar measure on $U_0$. Let $U_1$ be a $\sigma$-stable
 open normal subgroup of $U_0$ such that $w\in \Pi^{U_1}$. Let $H$ be
 the quotient group $U_0/U_1$. Then we have
$$ \int_{U_0}\Pi(u)w\,du = \sum_{h\in H} \Pi(h)w = \sum_{h\in H^\sigma} \Pi(h)w +
\sum_{h\in H\setminus H^\sigma} \Pi(h)w. $$ Since the $\Gamma$-action
on $H\setminus H^\sigma$ is free, there exists a subset
$X\subset H\setminus H^\sigma$ such that
$H\setminus H^\sigma = \coprod_{i=0}^{\ell-1}\sigma^iX$, and we get
the following equality
$$
\sum_{h\in H} \Pi(h)w = \sum_{h\in H^\sigma} \Pi(h)w + {\rm
  Nr}\big(\sum_{x\in X}\Pi(x)w\big).
$$
Note that $H^\sigma = \{e\}$ as $U_i$ are $p$-groups and $\sigma$ is
an element of order $\ell$. Then it follows from the above equation
that
$$ \sum_{h\in H} \Pi(h)w = w $$
in $\widehat{H}^0(\Gamma, \Pi(U(F)))$. Now, using the equation
(\ref{argu}), we get that $w=0$ in $\widehat{H}^0(\Gamma,\Pi(U(F)))$.

We now consider the case where $i=1$. 
Let $v\in {\rm Ker}\big({\rm Nr}:\Pi(U(F))\rightarrow \Pi(U(F))\big)$. There 
exists a $\sigma$-stable compact open subgroup $N_0$ of $U(F)$ such that
$$ \int_{N_0} \Pi(n)v\,dn = 0, $$
where $dn$ is a Haar measure on $N_0$. Let $N_1$ be a $\sigma$-stable
open normal subgroup of $N_0$ such that $v\in \Pi^{N_1}$. If
$\mathcal{N}$ denotes the quotient group $N_0/N_1$, then we have
$$ \int_{N_0} \Pi(n)v\,dn = \sum_{n\in \mathcal{N}} \Pi(n)v 
= \sum_{n\in \mathcal{N}^\sigma}\Pi(n)v + \sum_{n\in \mathcal{N}\setminus 
\mathcal{N}^\sigma}\Pi(n)v. $$
Note that $\mathcal{N}^\sigma = \{e\}$ as $N_i$ are $p$-groups and $\sigma$ 
is an element of order $\ell$, and there exists a subset $X\subseteq 
\mathcal{N}\setminus \mathcal{N}^\sigma$ such that $\mathcal{N}\setminus 
\mathcal{N}^\sigma = \coprod_{i=0}^{\ell-1}\sigma^iX$. Then the above 
identity becomes
\begin{equation}\label{argu_3}
v \,\,+ \,\,\sum_{i=0}^{\ell-1}\sum_{x\in X}\Pi(\sigma^i(x))v = 0.
\end{equation}
Set $w = \sum_{i=0}^{\ell-1}\sum_{x\in X}\Pi(\sigma^i(x))v$. We will show 
that $w\in {\rm Im}(1-\sigma)$. Using the fact that ${\rm Nr}(v) = 0$, we have
\begin{align*}
w 
&= \sum_{x\in X}\Pi(x)v \,\,+\,\, \sum_{i=1}^{\ell-1}\sum_{x\in X} \Pi(\sigma^i(x))v\\
&= -\sum_{i=1}^{\ell-1}\sum_{x\in X}
\Pi(x)\sigma^{i}v \,\,+\,\, \sum_{i=1}^{\ell-1}\sum_{x\in X} \Pi(\sigma^i(x)) v\\
&= -\sum_{i=1}^{\ell-1}\sigma^i\sum_{x\in X} \Pi(\sigma^{\ell-i}(x))v 
\,\,+\,\, \sum_{i=1}^{\ell-1}\sum_{x\in X}
\Pi(\sigma^{\ell-i}(x)) v\\
&= \sum_{i=1}^{\ell-1}(1-\sigma^i)\sum_{x\in X}\Pi^{\sigma^{\ell-i}}(x)v.
\end{align*}
Thus, we get that $w = (1-\sigma)w_1$ for some $w_1\in \Pi(U(F))$, and
hence it follows from (\ref{argu_3}) that $v = 0$ in
$\widehat{H}^1(\Gamma, \Pi(U(F))$.
\end{proof}
\begin{corollary}
  Let $G$ be a reductive algebraic group and let $P$ be a
  $\sigma$-stable parabolic subgroup with unipotent radical $U$ and a
  $\sigma$-stable Levi subgroup $M$. Assume that $U\cap G^\sigma$ is
  trivial and $G^\sigma\subset M$.  Then, the natural map
     $$\widehat{H}^i(\Gamma, \Pi)\rightarrow \widehat{H}^i(\Gamma, \Pi_{U(F)})$$
     is an isomorphism. 
\end{corollary}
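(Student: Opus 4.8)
\textbf{Proof plan.} The plan is to deduce this corollary from Proposition \ref{prop} by a short piece of homological algebra. First I would record the tautological short exact sequence of smooth $\overline{\mathbb{F}}_\ell$-representations of $G(F)$,
$$0 \longrightarrow \Pi(U(F)) \longrightarrow \Pi \longrightarrow \Pi_{U(F)} \longrightarrow 0,$$
which is exact because the Jacquet functor $\Pi\mapsto\Pi_{U(F)}$ is exact on smooth representations. Since $U$ is $\sigma$-stable, all three terms carry compatible actions of $\Gamma$ and the maps are $\Gamma$-equivariant (the subspace $\Pi(U(F))$ being $\sigma$-stable, as already observed before Proposition \ref{prop}). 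Thus this is a short exact sequence of $\overline{\mathbb{F}}_\ell[\Gamma]$-modules which, being a sequence of $\overline{\mathbb{F}}_\ell$-vector spaces, splits over $\overline{\mathbb{F}}_\ell$; hence it gives rise to the usual long exact sequence in Tate cohomology of $\Gamma$.

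Next I would use the $2$-periodicity of Tate cohomology of the cyclic group $\Gamma$ of prime order $\ell$: combined with Proposition \ref{prop}, which gives $\widehat{H}^i(\Gamma,\Pi(U(F)))=0$ for $i\in\{0,1\}$ and hence for all $i$, the long exact sequence breaks into short exact sequences
$$0 \longrightarrow \widehat{H}^i(\Gamma,\Pi) \longrightarrow \widehat{H}^i(\Gamma,\Pi_{U(F)}) \longrightarrow 0$$
for every $i$, in which the arrow is the map induced by the surjection $\Pi\twoheadrightarrow\Pi_{U(F)}$ --- that is, the natural map of the statement. This is exactly the assertion to be proved.

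The only point that needs checking is that Proposition \ref{prop} is applicable, i.e.\ that $U(F)$ has no nontrivial $\sigma$-fixed points; this is immediate from the hypothesis that $U\cap G^\sigma$ is trivial, since $U^\sigma\subseteq U\cap G^\sigma$. The complementary hypothesis $G^\sigma\subseteq M$ ensures, exactly as in the discussion preceding Proposition \ref{prop}, that $M^\sigma=G^\sigma$, so that the Tate cohomology spaces are genuinely representations of $G^\sigma(F)$.

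I do not anticipate a real obstacle here: all of the substance lies in Proposition \ref{prop}, and what remains is the snake-lemma long exact sequence for Tate cohomology together with its $2$-periodicity. The only minor care needed is the exactness of the Jacquet functor (standard for smooth representations) and the bookkeeping that the hypotheses of the corollary place us in the setting where Proposition \ref{prop} applies.
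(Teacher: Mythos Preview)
Your argument is correct and is precisely the intended one: the paper states this corollary immediately after Proposition~\ref{prop} without proof, because it follows at once from the long exact sequence in Tate cohomology attached to $0\to\Pi(U(F))\to\Pi\to\Pi_{U(F)}\to 0$ together with the vanishing $\widehat{H}^i(\Gamma,\Pi(U(F)))=0$ supplied by the proposition (whose proof only uses that $U^\sigma=\{e\}$, which is guaranteed by the hypothesis $U\cap G^\sigma=\{e\}$). One small remark: you do not need to invoke that the sequence splits over $\overline{\mathbb{F}}_\ell$ to get the long exact sequence in Tate cohomology---any short exact sequence of $\mathbb{Z}[\Gamma]$-modules yields one.
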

\begin{corollary}
  Let $G$ be a reductive algebraic group defined over a $p$-adic field
  $F$ and let $s$ be an element of order $\ell\neq p$ such that the
  group $C_G(s)$ is contained in a proper parabolic subgroup of $G$.
  Let $\Pi$ be an $\ell$-modular cuspidal representation of
  $G(F)$. Then the Tate cohomology space
  $\widehat{H}^i(\langle s\rangle, \Pi)$ is trivial, for
  $i\in \{0,1\}$.
\end{corollary}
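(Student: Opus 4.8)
The plan is to deduce the statement from the first Corollary above, applied to a suitable $\sigma$-stable proper parabolic, where $\sigma$ denotes conjugation by $s$ on $G$. Since $C_G(s)$ is contained in a proper parabolic it is a proper subgroup of $G$, so $s$ is not central and $\sigma$ has exact order $\ell$; the cyclic group $\langle s\rangle\subseteq G(F)$ of order $\ell$ acts on $\Pi$ through $\Pi$ itself, which is the same as letting $\langle\sigma\rangle$ act through the extension $(g,\sigma^j)\mapsto\Pi(gs^j)$ of $\Pi$ to $G(F)\rtimes\langle\sigma\rangle$, and under this identification $\widehat{H}^i(\langle s\rangle,\Pi)=\widehat{H}^i(\langle\sigma\rangle,\Pi)$ as representations of $G^\sigma(F)=C_{G(F)}(s)$. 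Hence it is enough to find a proper $F$-parabolic $P$ that is $\sigma$-stable, with a $\sigma$-stable Levi $M$ and unipotent radical $U$ satisfying $U\cap G^\sigma=\{e\}$ and $G^\sigma\subseteq M$: by the first Corollary this yields $\widehat{H}^i(\langle\sigma\rangle,\Pi)\cong\widehat{H}^i(\langle\sigma\rangle,\Pi_{U(F)})$ for $i\in\{0,1\}$, and $\Pi_{U(F)}=0$ because $P$ is proper and $\Pi$ is cuspidal.

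I would construct $P$ as follows. As $F$ has characteristic zero, the finite-order element $s$ is semisimple, so $(C_G(s))^0$ is a connected reductive $F$-group of full rank in $G$ whose centre contains $s$; fix a maximal $F$-torus $T_0$ of $(C_G(s))^0$, so that $s\in T_0$ and $T_0$ is a maximal $F$-torus of $G$. Let $Q\subsetneq G$ be a proper $F$-parabolic containing $C_G(s)$, as given by the hypothesis. Because $s\in C_G(s)\subseteq Q$ and parabolic subgroups are self-normalizing, $Q$ is $\sigma$-stable, and so is its unipotent radical $U$. Let $M$ be the unique Levi factor of $Q$ containing $T_0$; it is defined over $F$, and it is $\sigma$-stable since $s\in T_0\subseteq M$. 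The essential point is that $C_G(s)\subseteq M$: the connected reductive subgroup $(C_G(s))^0$ of $Q$ lies in some Levi of $Q$, any such Levi contains $T_0$ and hence equals $M$; and then for $g\in C_G(s)$ the conjugate $gMg^{-1}$ is again a Levi of $Q$ containing $(C_G(s))^0$, so $gMg^{-1}=M$ and $g\in N_Q(M)=M$. With $P=Q$ we now have $G^\sigma=C_G(s)\subseteq M$ and $U\cap G^\sigma\subseteq U\cap M=\{e\}$, so the reduction of the previous paragraph applies and gives $\widehat{H}^i(\langle\sigma\rangle,\Pi)\cong\widehat{H}^i(\langle\sigma\rangle,\Pi_{U(F)})=0$.

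I expect the delicate point to be arranging the Levi $M$ to be simultaneously $\sigma$-stable, defined over $F$, and to contain the full (possibly disconnected) group $C_G(s)$ rather than just $(C_G(s))^0$; this is precisely what the uniqueness of the Levi factor of $Q$ through $T_0$ provides. If one is reluctant to assume the parabolic furnished by the hypothesis is $F$-rational, an alternative is to take $P=Z_G(C)U_\Psi$ as in the construction preceding Proposition~\ref{prop}, with $C$ the maximal $F$-split torus of $Z(G^\sigma)$: that discussion already supplies the $\sigma$-stability, the containment $G^\sigma\subseteq M$, and the triviality of $U\cap G^\sigma$, and the only remaining point, that $P$ is proper, follows because the maximal $F$-split torus of $Z(M)$ for the Levi $M$ of $Q$ above is strictly larger than the maximal $F$-split torus of $Z(G)$ yet lies in $Z(C_G(s))$, hence in $C$, so $C\not\subseteq Z(G)$ and $Z_G(C)\neq G$.
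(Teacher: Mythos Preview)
Your proof is correct and follows the route the paper intends: the second Corollary is stated immediately after the first with no separate argument, so the implicit proof is exactly to apply the first Corollary to a suitable $\sigma$-stable proper $F$-parabolic and then use cuspidality to kill the Jacquet module. Your primary construction, taking the given parabolic $Q$ and its Levi through a maximal $F$-torus $T_0$ of $(C_G(s))^0$, is a clean way to realise this (the key containment $(C_G(s))^0\subseteq M$ follows because both $U_\alpha$ and $U_{-\alpha}$ lie in $Q$ for every root $\alpha$ of $(C_G(s))^0$ relative to $T_0$); your alternative via $Z_G(C)$ is the construction the paper sets up before Proposition~\ref{prop}, so either choice is in line with the paper.
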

We resume the proof of Theorem \ref{main}. Here, we will use the fact
which was also used in our previous work \cite[Theorem 2.3]
{dhar2025jacquetmodulestatecohomology}, that the sub-representations
of finitely generated smooth $\ell$-modular representations of $G(F)$
are again finitely generated. This is a fundamental result that
follows from the works of \cite[Corollary 1.3]{DHMK_finiteness} and
\cite[Corollary 4.5] {Dat_finiteness_p_adic}.  From now, we assume
that the maximal $F$-split torus $C$ of $Z(G^\sigma)$ is contained in
$Z(G)$.  For a smooth representation $\pi$ of $G^\sigma(F)$, let
$\Phi$ be the following map obtained by Frobenius reciprocity map
 $$\Phi:\pi\rightarrow \prod_{P=MN, P\neq 
   G^\sigma}i_{P(F)}^{G^\sigma(F)}\pi_{N(F)}$$ and let $\pi_c$ be the
 kernel of $\Phi$. Let $P$ be a proper parabolic subgroup of
 $G^\sigma$ with unipotent radical $U$. We note that the Jacquet
 modules $({\pi_c})_{U(F)}$ is trivial. Applying $\Phi$ to the Tate
 cohomology space $\widehat{H}^0(\Gamma, \Pi)$, we get the following
 exact sequence
 \begin{equation}\label{conclude_exact_sequence}
   0\rightarrow  \widehat{H}^i(\Gamma, \Pi)_c\rightarrow
   \widehat{H}^i(\Gamma, \Pi)\xrightarrow{\Phi}
   \prod_{P=MN, P\neq G^\sigma}i_{P(F)}^{G^\sigma(F)}[\widehat{H}^i(\Gamma, \Pi)_{N(F)}]
\end{equation}
Note that $\widehat{H}^i(\Gamma, \Pi)_c$ is a cuspidal representation,
and it is a $G^\sigma(F)$ sub-representation of the finitely generated
representation $\widehat{H}^i(\Gamma, \Pi)$.  This implies that
$\widehat{H}^i(\Gamma, \Pi)_c$ is an $\ell$-modular finitely generated
cuspidal representation of $G^\sigma(F)$, and using the fact that the
split component $C$ of $Z(G^\sigma)$ is contained in $Z(G)$, we get
that $\widehat{H}^i(\Gamma, \Pi)_c$ is a finite length representation
of $G^\sigma(F)$.  Let $Q$ be a $\sigma$-stable parabolic subgroup
such that $Q^\sigma=P$, and let $U$ be the unipotent radical of
$Q$. Let $L$ be a $\sigma$-stable Levi subgroup of $Q$ such that the
connected component of $L^\sigma$ is a Levi subgroup $M'$ of
$P$. Then, using \cite[Theorem
3.3.2]{dhar2025jacquetmodulestatecohomology} we have
$$ \widehat{H}^i(\Gamma, \Pi)_{N(F)}\subset 
\widehat{H}^i(\Gamma, \Pi_{U(F)}). $$ Then, using induction, we get
that the Tate cohomology space $\widehat{H}^i(\Gamma, \Pi_{U(F)})$ is
a finite length representation of $M'(F)$. Since parabolic induction
preserves finite length, it follows from exact sequence
(\ref{conclude_exact_sequence}) that $\widehat{H}^i(\Gamma, \Pi)$ is a
finite length representation of $G^\sigma(F)$.
\begin{section}{Genericity of Tate cohomology}
  In this section, we give an application of Theorem \ref{main} to the
  genericity of Tate cohomology spaces.  Let $G$ be a quasi-split
  reductive algebraic group defined over a $p$-adic field $F$, and let
  $\sigma\in {\rm Aut}(G)(F)$ be an element of prime order $\ell$. We
  denote by $\Gamma$ the group generated by $\sigma$. Let
  $\mathfrak{g}$ be the Lie algebra of $G$, and the $\sigma$-fixed
  point Lie subalgebra $\mathfrak{g}^\sigma$ is the Lie algebra of
  $G^\sigma$.  Let $B_\mathfrak{g}$ be the Killing form of $G$.  We
  assume that $G^\sigma$ is a quasi-split reductive algebraic group
  and that there exists an element $X\in \mathfrak{g}^\sigma$ such
  that $X$ is regular nilpotent in $\mathfrak{g}$. This implies that
  $X$ is regular in $\mathfrak{g}^\sigma$.  Let
  $\varphi:\mathbb{G}_m\rightarrow G$ be a co-character such that
  $X\in \mathfrak{g}_{-2}$, where
  $\mathfrak{g}=\bigoplus_{i\in \mathbb{Z}}\mathfrak{g}_i$ is the
  grading induced by $\varphi$. Let $N$ be the closed subgroup of $G$
  with Lie algebra $\bigoplus_{i>0}\mathfrak{g}_i$, and let $B$ be the
  Borel subgroup normalising $N$.  Let $B^-$ be the Borel subgroup of
  $G$ with Lie algebra $\bigoplus_{i\leq 0}\mathfrak{g}_i$. Note that
  the group $B$ is stable under $\Gamma$, and $(B^\sigma)^0$ is a
  Borel subgroup of $(G^\sigma)^0$. Note that $N$ is the unipotent
  radical of $B$ and $N^\sigma$ is the unipotent radical of
  $(B^\sigma)^0$.

  Let $\mathfrak{o}_F$ be the ring of integers of $F$, and let
  $\mathfrak{p}_F$ be the maximal ideal of $F$. Fix an uniformizer
  $\varpi$ of $F$. Let
  $\psi:F\rightarrow \overline{\mathbb{Z}}_\ell^\times$ be an additive
  character which is trivial on $\mathfrak{o}_F$ and non-trivial on
  $\mathfrak{p}_F$. We denote by
  $\overline{\psi}:F\rightarrow \overline{\mathbb{F}}_\ell^\times$ the
  mod-$\ell$ reduction of $\psi$.  Let
  $\psi_X:N(F)\rightarrow \overline{\mathbb{Z}}_\ell^\times$ be the
  character
$$\psi_X({\rm exp}(Y))=\psi({\rm B}_\mathfrak{g}(X, Y)).$$
The mod-$\ell$ reduction of $\psi_X$ is denoted by
$\overline{\psi}_X$.  Recall that for an irreducible smooth
$\ell$-modular representation $(\Pi,V)$ of $G(F)$, the multiplicity
one statement
$$ {\rm dim}_{\overline{\mathbb{F}}_\ell}(V_{N(F),\overline{\psi}_X}) \leq 1.$$
follows from \cite[Corollary 7.2]{nadir_whit_model}, and the representation
$(\Pi,V)$ is said to be $(N(F),\overline{\psi}_X)$-generic if 
${\rm dim}_{\overline{\mathbb{F}}_\ell}(V_{N(F),\overline{\psi}_X})=1$. 
Note that $\Gamma$ acts trivially on $V_{N(F),\overline{\psi}_X}$. 
We now prove the following theorem.
\begin{theorem}
  Let $(\Pi, V)$ be an irreducible smooth $\ell$-modular
  representation of $G(F)$ such that $\Pi$ extends as a representation
  of $G(F)\rtimes\Gamma$.  Let
  $\overline{\psi}_X:N(F)\rightarrow
  \overline{\mathbb{F}}_\ell^\times$ be a (necessarily non-degenerate)
  character of $N(F)$ associated with a $\sigma$-invariant element
  $X\in \mathfrak{g}$ such that $\overline{\psi}_X$ restricts to a
  non-degenerate character of $N^\sigma(F)$.  If $(\Pi, V)$ is
  $(N(F), \overline{\psi}_X)$-generic, then
  $\widehat{H}^i(\Gamma, \Pi)$ has a unique
  $(N^\sigma(F), \overline{\psi}_X)$-generic subquotient, for
  $i\in \{0,1\}$.
\end{theorem}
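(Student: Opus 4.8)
The plan is to reduce the assertion to a single dimension computation. By Theorem~\ref{main} the $G^\sigma(F)$-representation $\widehat{H}^i(\Gamma,\Pi)$ has finite length, hence finitely many Jordan--Hölder constituents, each occurring with finite multiplicity. The twisted Jacquet functor $W\mapsto W_{N^\sigma(F),\overline{\psi}_X}$ is exact on smooth representations, and by the multiplicity one statement for the quasi-split group $G^\sigma(F)$ (cf.\ \cite[Corollary~7.2]{nadir_whit_model}) an irreducible smooth representation $\pi$ of $G^\sigma(F)$ satisfies $\dim_{\overline{\mathbb{F}}_\ell}\pi_{N^\sigma(F),\overline{\psi}_X}\le 1$, with equality precisely when $\pi$ is $(N^\sigma(F),\overline{\psi}_X)$-generic. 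Therefore it is enough to show
\[
\dim_{\overline{\mathbb{F}}_\ell}\widehat{H}^i(\Gamma,\Pi)_{N^\sigma(F),\overline{\psi}_X}=1\qquad(i\in\{0,1\});
\]
this forces exactly one constituent of $\widehat{H}^i(\Gamma,\Pi)$ to be $(N^\sigma(F),\overline{\psi}_X)$-generic, occurring with multiplicity one, which is the claim.

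To carry out the computation I would first note that Tate cohomology of $\Gamma$ commutes with exact $\Gamma$-equivariant functors: $\widehat{H}^0(\Gamma,-)$ and $\widehat{H}^1(\Gamma,-)$ are the subquotients $\ker(1-\sigma)/\operatorname{im}(\mathrm{Nr})$ and $\ker(\mathrm{Nr})/\operatorname{im}(1-\sigma)$, where $\mathrm{Nr}=1+\sigma+\cdots+\sigma^{\ell-1}$, and an exact functor preserves these kernels, images and quotients. Since $\sigma$ acts trivially on $N^\sigma(F)$ and fixes $\overline{\psi}_X$, the functor $(-)_{N^\sigma(F),\overline{\psi}_X}$ is exact and $\Gamma$-equivariant, so $\widehat{H}^i(\Gamma,\Pi)_{N^\sigma(F),\overline{\psi}_X}\cong\widehat{H}^i\bigl(\Gamma,\Pi_{N^\sigma(F),\overline{\psi}_X}\bigr)$. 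I would then compare $\Pi_{N^\sigma(F),\overline{\psi}_X}$ with the full twisted Jacquet module $\Pi_{N(F),\overline{\psi}_X}$: the canonical surjection $\Pi\twoheadrightarrow\Pi_{N(F),\overline{\psi}_X}$ factors through $\Pi_{N^\sigma(F),\overline{\psi}_X}$, because $N(F)$ — hence $N^\sigma(F)$ — acts on $\Pi_{N(F),\overline{\psi}_X}$ through $\overline{\psi}_X$. This produces a $\Gamma$-equivariant short exact sequence
\[
0\longrightarrow K\longrightarrow \Pi_{N^\sigma(F),\overline{\psi}_X}\longrightarrow \Pi_{N(F),\overline{\psi}_X}\longrightarrow 0,
\]
in which $K$ is a ``relative'' twisted Jacquet module recording the difference between $N(F)$ and $N^\sigma(F)$.

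The heart of the matter is to prove $\widehat{H}^0(\Gamma,K)=\widehat{H}^1(\Gamma,K)=0$; granting this, the (two-periodic) long exact sequence of Tate cohomology attached to the sequence above identifies $\widehat{H}^i(\Gamma,\Pi_{N^\sigma(F),\overline{\psi}_X})$ with $\widehat{H}^i(\Gamma,\Pi_{N(F),\overline{\psi}_X})$. To establish the vanishing I would use the grading: taking (as one may, since $X\in\mathfrak{g}^\sigma$) the cocharacter $\varphi$ to be $\sigma$-equivariant, $\sigma$ preserves each $\mathfrak{g}_j$, and as $\ell\ne p$ the group algebra $F[\Gamma]$ and its integral form are semisimple, giving $\sigma$-stable decompositions $\mathfrak{g}_j=\mathfrak{g}_j^\sigma\oplus\mathfrak{g}_j'$ on whose second summand $1-\sigma$ acts invertibly (in particular $(\mathfrak{g}_j')^\sigma=0$). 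This yields a $\sigma$-equivariant description of $N(F)$, filtered by the $\sigma$-stable normal subgroups coming from the lower pieces of the grading, with successive layers $\mathfrak{g}_j^\sigma(F)$ and $\mathfrak{g}_j'(F)$, the subgroup $N^\sigma(F)$ corresponding to the $\mathfrak{g}_j^\sigma$-layers. Filtering $K$ compatibly and treating each transverse layer $\mathfrak{g}_j'$ on $\sigma$-stable compact open subgroups, the averaging / norm identities from the proof of Proposition~\ref{prop} apply, the twist by $\overline{\psi}_X$ causing no trouble because $\overline{\psi}_X$ is $\sigma$-invariant and, the grading attached to a regular nilpotent being even, is an honest character; they force the Tate cohomology of each layer, hence of $K$, to vanish. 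This is the step I expect to be the main obstacle, the delicate points being to organise the filtration of $K$ so that the layer where $\overline{\psi}_X$ is nontrivial (the $\mathfrak{g}_2$-layer) is handled correctly, and to deal with the fact that $N(F)/N^\sigma(F)$ is only a variety and not a group.

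To conclude: since $(\Pi,V)$ is $(N(F),\overline{\psi}_X)$-generic, $\Pi_{N(F),\overline{\psi}_X}$ is one-dimensional, and $\Gamma$ acts on it by an $\ell$-th root of unity in $\overline{\mathbb{F}}_\ell^\times$, hence trivially. Thus $\widehat{H}^i(\Gamma,\Pi_{N(F),\overline{\psi}_X})\cong\widehat{H}^i(\Gamma,\overline{\mathbb{F}}_\ell)$, which is one-dimensional for $i\in\{0,1\}$ because $\operatorname{char}\overline{\mathbb{F}}_\ell=\ell=\#\Gamma$. Chaining the isomorphisms above gives $\dim_{\overline{\mathbb{F}}_\ell}\widehat{H}^i(\Gamma,\Pi)_{N^\sigma(F),\overline{\psi}_X}=1$, which by the first paragraph finishes the proof.
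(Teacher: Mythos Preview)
Your reduction and endgame match the paper's: both arguments use Theorem~\ref{main} together with multiplicity one for $\ell$-modular Whittaker models to reduce the assertion to
\[
\dim_{\overline{\mathbb{F}}_\ell}\widehat{H}^i(\Gamma,\Pi)_{N^\sigma(F),\overline{\psi}_X}=1,
\]
and both finish by identifying this space with $\widehat{H}^i(\Gamma,\Pi_{N(F),\overline{\psi}_X})\cong\widehat{H}^i(\Gamma,\overline{\mathbb{F}}_\ell)$, which is one-dimensional. Your observation that the twisted Jacquet functor for $N^\sigma(F)$ is exact and (because $\sigma$ fixes $N^\sigma(F)$ pointwise and fixes $\overline{\psi}_X$) literally $\Gamma$-equivariant, hence commutes with Tate cohomology, is clean; the paper instead invokes \cite[Theorem~3.3.2]{dhar2025jacquetmodulestatecohomology} to obtain only the injection $\widehat{H}^i(\Gamma,V)_{N^\sigma(F),\overline{\psi}_X}\hookrightarrow\widehat{H}^i(\Gamma,V_{N(F),\overline{\psi}_X})$.

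Where the two arguments genuinely diverge is in establishing that this injection is nonzero (equivalently, in your language, that $\widehat{H}^i(\Gamma,K)=0$). The paper does \emph{not} attack $K$ directly. Instead it builds, following Rodier, Moeglin--Waldspurger and Vign\'eras, a $\sigma$-stable pro-$p$ subgroup $K_n\subset G(F)$ with a $\sigma$-invariant character $\overline{\chi}_n'$ such that the natural map $V^{K_n,\overline{\chi}_n'}\to V_{N(F),\overline{\psi}_X}$ is an isomorphism (this needs a lift to characteristic zero). Since $K_n$ is pro-$p$ one has a $\sigma$-equivariant splitting $V=W\oplus V^{K_n,\overline{\chi}_n'}$, which immediately forces $\widehat{H}^i(\Gamma,V)\to\widehat{H}^i(\Gamma,V_{N(F),\overline{\psi}_X})$ to be surjective, hence nonzero. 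Combined with the cited injectivity this gives the desired one-dimensionality. The virtue of this route is that the hard analysis (building $K_n$, checking the section works mod $\ell$) is already in the literature; the only new point is $\sigma$-stability of $K_n$.

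Your route, by contrast, leaves the real work in the step you flag as ``the main obstacle'': proving $\widehat{H}^i(\Gamma,K)=0$. The sketch you give is on the right track but is not yet a proof; the difficulty you name (that $N/N^\sigma$ is not a group) is genuine. One way to organise it: set $N_{(k)}=\exp\bigl(\bigoplus_{j\ge 2k}\mathfrak{g}_j\bigr)$ and $H_k=N^\sigma\cdot N_{(k)}$. Then each $H_k$ is a $\sigma$-stable subgroup, $H_{k+1}\trianglelefteq H_k$ (because $[N_{(k)},N]\subset N_{(k+1)}$), and $H_k/H_{k+1}\cong\mathfrak{g}_{2k}(F)/\mathfrak{g}_{2k}^\sigma(F)$, on which $\sigma$ acts without nonzero fixed points. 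Since $H_1=N$ and $H_r=N^\sigma$ for large $r$, this filters $K$ by pieces of the form $W(A)$ with $A^\sigma=\{0\}$ an abelian $p$-adic group, and the averaging argument of Proposition~\ref{prop} (unchanged by the $\sigma$-invariant twist $\overline{\psi}_X$) kills their Tate cohomology. Filling this in would complete your argument; as written, it is a plausible outline rather than a proof, whereas the paper's Rodier-section approach is complete modulo the cited references.
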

\begin{proof}
  We will use the fact that the natural map
  $V\rightarrow V_{N(F), \overline{\psi}_X}$ is an isomorphism on
  semi-invariants with respect to a compact open subgroup $K$ of
  $G(F)$, this important result is due to Rodier in the case of smooth
  complex representations, and it is later generalised by Moeglin and
  Waldspurger (\cite{MW_paper}) for degenerate Whittaker models. In
  \cite[Section III]{Vigneras_Highest_Whittaker_model}, Vign\'eras
  used these ideas to prove $G(F)$-stability of certain natural
  lattices in the Whittaker model of a smooth irreducible
  representation of $G(F)$ on a $\overline{\mathbb{Q}}_\ell$-vector
  space. We will recall the construction of the group $K$ following
  Moeglin--Waldspurger and Vigneras's paper. We need to verify that
  $K$ is $\sigma$-stable.

Let $L$ be a $\sigma$-stable lattice in $\mathfrak{g}$ such 
that $[L, L]\subset L$. Let $\mathfrak{g}^X$ be the 
centralizer in $\mathfrak{g}$ of the element $X$. Let $\mathfrak{m}$ be a 
$\varphi$-stable complement of $\mathfrak{g}^X$. Note that
we can choose $\mathfrak{m}$ such that 
$\sigma(\mathfrak{m})=\mathfrak{m}$. 
Note that the nil-radical of the alternating form 
$B_X(Y, Z)=B_\mathfrak{g}(X, [Y\ Z])$
is $\mathfrak{g}^X$. Choose a $\sigma$-stable lattice 
$M_1$ in $\mathfrak{m}$ such that 
$B_X(M_1, M_1)\subset \mathfrak{o}_F$. 
The lattice 
$$L_1=M_1\oplus \sum_{i\in \mathbb{Z}}
(L\cap \mathfrak{g}^X\cap \mathfrak{g}_i)$$ defines the groups
$G_n={\rm exp}(\varpi^n L_1)$, for all $n\geq B$, where $B$ is a
constant defined in \cite[Lemma I.3]{MW_paper}. Let $\chi_n$ be the
character of $G_n$ given by
$${\rm exp}(Z)\mapsto \psi(B_\mathfrak{g}(\varpi^{-2n}X, Z)),$$
for all $Z\in \varpi^nL_1$. 
Let $t=\varphi(\varpi)$
and let 
$$K_n=t^{-n}G_nt^n,\ \chi_n'(k)=\chi_n(t^nkt^{-n}).$$
Note that $\chi_n'(x)=\psi_{X}(x)$, for $x\in N(F)\cap K_n$, and 
$\chi_n'(x)=1$, for $x\in B^-(F)\cap K_n$. 
The mod-$\ell$ reduction of $\chi_n'$ is denoted by 
$\overline{\chi}_n'$.
Note that $K_n$ and $\overline{\chi}'_n$ are $\Gamma$-invariant. As in the 
complex case, we claim 
that the natural map 
\begin{equation}\label{section}
    V^{K_n, \overline{\chi}_n'}\rightarrow V_{N(F), \overline{\psi}_X}
\end{equation}
is an isomorphism for all $n$ large enough. To see this, given a 
$(N(F), \overline{\psi}_X)$-generic $\ell$-modular representation 
$(\Pi, V)$, there exists an $\ell$-adic integral 
representation $\widetilde{\Pi}$ of $G(F)$ such that $\Pi$ is the 
unique $(N(F), \psi_X)$-generic component of the mod-$\ell$
reduction of $\widetilde{\Pi}$ 
(see \cite[Corollary 7.2]{nadir_whit_model}). In other words, there is a 
$G(F)$-stable lattice 
$\mathcal{L}$ in the $\overline{\mathbb{Q}}_\ell$-representation
$\widetilde{\Pi}$ such that 
$\mathcal{L}\otimes_{\overline{\mathbb{Z}}_\ell}\overline{\mathbb{F}}_\ell$
contains $\Pi$ as the unique $(N(F), \overline{\psi}_X)$-generic component.  
 Note that Vign\'eras (\cite[Proposition III.4.6]
{Vigneras_Highest_Whittaker_model})
showed that 
$$\mathcal{L}^{K_n, \chi_n'}\rightarrow \mathcal{L}_{N(F), \psi_X}$$
is an isomorphism, and clearly both sides are rank one free modules over 
$\overline{\mathbb{Z}}_\ell$. 
Since $K_n$ is a pro-$p$ group, we have 
$$V^{K_n, \overline{\chi}_n'}\subset \mathcal{L}^{K_n, 
\chi_n'}\otimes_{\overline{\mathbb{Z}}_\ell}
\overline{\mathbb{F}}_\ell,$$
and 
$$\mathcal{L}_{N(F),\psi_X}
\otimes_{\overline{\mathbb{Z}}_\ell}
\overline{\mathbb{F}}_\ell
=V_{N(F), \overline{\psi}_X}.$$
It remains to show that $V^{K_n, \chi_n'}$ is a non-zero
space for $n$ large, and to show this we follow 
the arguments in \cite[Lemme I.10]{MW_paper}. 
Consider the quotient  map 
$$V\rightarrow V_{N(F), \overline{\psi}_X}$$
and let $v\in V$ be an element which maps to a non-zero 
element in $V_{N(F), \overline{\psi}_X}$. 
There exists a large $n$
such that $v\in V^{K_n\cap B^-(F)}$. Note that 
the element 
$$I_n'(v)=\int_{N(F)\cap K_n} (\overline{\chi}_n')^{-1}(x)\Pi(x)v\,dx $$
has the same image as $v$ in $V_{N(F), \overline{\psi}_X}$ and
we have $I_n'(v)\in V^{K_n,\overline{\chi}_n'}\backslash \{0\}$.
This shows that the natural map \eqref{section} is an isomorphism.

When $(\Pi,V)$ extends as a
representation of $G(F)\rtimes \Gamma$, we observe that the map 
\eqref{section} is a $\sigma$-equivariant map. Note that the 
group $K_n$ is a pro-$p$ group. Thus, we may find a $\sigma$-equivariant
splitting 
$$V=W\oplus V^{K_n, \overline{\chi}_n'},$$
and we get that the natural map 
$$\widehat{H}^i(\Gamma, V)\rightarrow 
\widehat{H}^i(\Gamma, V^{K_n, \overline{\chi}_n'})\xrightarrow{\simeq}
\widehat{H}^i(\Gamma, V_{N(F), \overline{\psi}_X})$$ is non-zero for
$i\in \{0,1\}$.  It follows from \cite[Theorem
3.3.2]{dhar2025jacquetmodulestatecohomology} that the natural map
$$\widehat{H}^i(\Gamma, V)\rightarrow \widehat{H}^i
(\Gamma, V_{N(F), \overline{\psi}_X})$$
factorises through an injective map 
$$\widehat{H}^i(\Gamma, V)_{N^\sigma(F), \overline{\psi}_X}
\rightarrow \widehat{H}^i(\Gamma, V_{N(F), \overline{\psi}_X}),$$
and it is non-zero as noted above. Hence,
the above map is an isomorphism. 
Thus, from our result on 
finiteness of Tate cohomology (Theorem \ref{main}), 
and the uniqueness of Whittaker model in the 
$\ell$-modular case 
\cite[Corollary 7.2]{nadir_whit_model}, we get that 
there exists a unique generic sub-quotient of 
$$\widehat{H}^i(\Gamma, \Pi).$$
This proves the theorem. 
\end{proof}
\begin{remark}\normalfont
The above theorem applies to the case where $G={\rm Res}_{E/F}H$,
where $H$ is a quasi-split reductive algebraic group defined over $F$, 
and $E/F$ is a cyclic Galois extension of degree $\ell$. The 
automorphism $\sigma$ is induced by a generator of ${\rm Gal}(E/F)$, and 
$G(F)=H(E)$ and $G^\sigma(F)=H(F)$. The theorem also covers the
case where $G={\rm GL}_{2n+1}$
and $G^\sigma={\rm SO}_{2n+1}$, and the case where $G={\rm GL}_{2n}$ and 
$G={\rm Sp}_{2n}$ these are some examples from classical groups.
Note that the case where $\sigma$ is a triality automorphism on the 
group $G$ of the type ${\rm D}_4$ is also an example where the theorem is 
applicable. The above theorem does not deal with the case where 
$G={\rm GL}_{2n}$ and $G^\sigma={\rm SO}_{2n}$.
\end{remark}

\end{section}
\bibliography{./biblio} 
\bibliographystyle{amsalpha}
\vspace{0.2 cm}
Sabyasachi Dhar, \\
Department of Mathematics and Statistics, Indian
Institute of Technology Bombay, Maharashtra 400076, India.\\
\texttt{mathsabya93@gmail.com},
\texttt{sabya@math.iitb.ac.in}.
\vspace{0.1 cm}\\
Santosh Nadimpalli,\\
Department of Mathematics and Statistics, Indian
Institute of Technology Kanpur, U.P. 208016, India.\\
\texttt{nvrnsantosh@gmail.com}, \texttt{nsantosh@iitk.ac.in}.

\end{document}